\newtheorem{theorem}{Theorem}[section]
\theoremstyle{plain}
\newtheorem{corollary}{Corollary}[section]
\newtheorem{definition}{Definition}[section]
\newtheorem{lemma}{Lemma}[section]
\newtheorem{remark}{Remark}
\numberwithin{equation}{section}
\begin{document}
\title[]{On Better Approximation Order for the Max-Product Meyer-K\"{o}nig
and Zeller Operator}
\author{SEZ\.{I}N \c{C}\.{I}T}
\address{Department of Mathematics, Faculty of Science, Gazi University,
Ankara, Turkey}
\email{sezincit@gazi.edu.tr}
\author{OG\"{U}N DO\u{G}RU}
\address{Department of Mathematics, Faculty of Science, Gazi University,
Ankara, Turkey}
\email{ogun.dogru@gazi.edu.tr}
\dedicatory{}
\date{}
\subjclass{ 41A10, 41A25, 41A36}
\keywords{Nonlinear Meyer-K\"{o}nig and Zeller operator, max-product kind
operator, modulus of continuity.}
\thanks{}

\begin{abstract}
In \cite{max prod MKZ}, Bede et al. defined the max-product Meyer-K\"{o}nig
and Zeller operator. They examined the approximation and shape preserving
properties of this operator, and they found the order of approximation to be 
$\frac{\sqrt{y}\left( 1-y\right) }{\sqrt{m}}$ by the modulus of continuity
and claimed that this order of approximation could only be improved in
certain subclasses of the functions. In contrast to this claim, we
demonstrate that we can obtain a better order of approximation without
reducing the function class by the classical modulus of continuity. We find
the degree of approximation to be $\frac{\left( 1-y\right) y^{\frac{1}{%
\alpha }}}{m^{1-\frac{1}{\alpha }}}$, $\alpha =2,3,...$ . Since $1-\frac{1}{%
\alpha }$ tends to $1$ for enough big $\alpha $, we improve this degree of
approximation.
\end{abstract}

\maketitle

\section{Introduction}

In \cite{approx by pseudo}, \cite{max-prod-shepard}, Bede et al. introduced
nonlinear operators, which are max-product and max-min operators based on
the open problem presented by Gal in \cite{Gal}. These operators showed us
that not all approximation operators need to be linear and also that the
algebraic structure used need not be the usual algebraic structure of the
real numbers. In this current study, we are concerned with max-product
operators that are generated by replacing probability by maximum set
functions, starting from the idea of obtaining linear positive operators by
probabilistic approximation. The construction of nonlinear operators takes
into account the possibility of transforming the algebraic structure from
the linear one to another more general one \cite{approx by pseudo}, \cite%
{max-prod kitap}. Therefore, we generalize the results from classical
approximation theory.

The study of nonlinear max-product operators, starting with the first
mention of the max-product Shepard operators, has been extensively studied
for important classes of linear and positive operators. The approximation
properties and shape preserving properties can be found in papers \cite%
{max-prod-shepard}, \cite{max-prod-berns}, \cite{max-prod-Szasz}, \cite%
{max-prod-berns-szazs}, \cite{Max-prod- Bleimann-Butz-Hahn}, \cite{max-prod
Baskakov}, \cite{max prod MKZ}, \cite{max-prod kitap}.

Before max-product Meyer K\"{o}nig and Zeller operator, which are the focus
of this paper, let us recall the classical Meyer K\"{o}nig and Zeller
operator introduced in \cite{Meyer Konig Zeller} as 
\begin{equation}
M_{m}(g;y)=\sum_{r=0}^{\infty }\binom{m+r}{r}y^{r}\left( 1-y\right)
^{m+1}g\left( \frac{r}{r+m+1}\right) ,  \label{classicMKZ}
\end{equation}%
where $y\in \left[ 0,1\right) $, $m\in 
%TCIMACRO{\U{2115} }%
%BeginExpansion
\mathbb{N}
%EndExpansion
$. Then, in \cite{cheneysharma}, Cheney and Sharma made a slight
modification by replacing $g\left( \frac{r}{m+r+1}\right) $ by $g\left( 
\frac{r}{m+r}\right) $. Additionally, the classical Meyer-K\"{o}nig and
Zeller operator it can be written as%
\begin{equation*}
M_{m}(g;y)=\frac{\sum_{r=0}^{\infty }\binom{m+r}{r}y^{r}\left( 1-y\right)
^{m+1}g\left( \frac{r}{r+m+1}\right) }{\sum_{r=0}^{\infty }\binom{m+r}{r}%
y^{r}\left( 1-y\right) ^{m+1}},
\end{equation*}%
since $M_{m}(e_{0};y)=1.$ In \cite{max prod MKZ}, Bede at al. replaced the
sum operator $\sum $ by the maximum operator $\bigvee $, and gave the
definition of the max-product Meyer-K\"{o}nig Zeller operator as follows:%
\begin{equation}
Z_{m}^{(M)}(g)(y):=\frac{\bigvee\limits_{r=0}^{\infty }z_{m,r}\left(
y\right) \,g\left( \frac{r}{m+r}\right) }{\bigvee\limits_{r=0}^{\infty
}z_{m,r}\left( y\right) },\text{ }y\in \lbrack 0,1),\text{ }m\in 
%TCIMACRO{\U{2115} }%
%BeginExpansion
\mathbb{N}
%EndExpansion
\text{ }  \label{maxproductMKZ}
\end{equation}%
where $z_{m,r}\left( y\right) =\binom{m+r}{r}y^{r}.$ Also see the degree of
approximation and some shape-preserving properties of $Z_{m}^{(M)}(g)(y)$\
operators are analysed in \cite{max prod MKZ}.

Our aim in this paper is to obtain a better approximation degree than \cite%
{max prod MKZ} with the modulus of continuity without reducing the classes
of functions. In particular, the degree of approximation for the $%
Z_{m}^{(M)}(g)(y)$ operator can be found (via the modulus of continuity) as $%
\omega \left( g;\sqrt{y}\left( 1-y\right) /\sqrt{m}\right) $. In addition,
Bede et al. claimed that the order of approximation could not be improved
except for some subclasses of functions (see, for details, \cite{max prod
MKZ}). Contrary to this claime, will demonstrate that a better degree of
approximation can be gained using the classical modulus of continuity. Our
motivation in obtaining this improved approximation is that we do not have
to use the Cauchy-Schwarz inequality when finding the approximation of the
nonlinear operators with the modulus of continuity.

Moreover, with similar way, order of approximation can be improved for
nonlinear max-product operators whose approximation and rate of convergence
properties are examined (see, for instance,\cite{better berns}, \cite{better
BHH}, \cite{better Szasz}).

\section{Preliminaries}

In order to determine the significant conclusions, it is essential to
provide an overview of the nonlinear operators known as max-product type. In
this part, we'll review the fundamental definitions and theorems of
nonlinear operators as presented in \cite{max-prod-berns-szazs}, \cite%
{approx by pseudo}.

In max-product approximation operators, the algebraic structure is changed
from the field of real numbers to an ordered, max-product semiring of
positive reals. It has the semiring structure $\left( \mathbb{R}_{+},\vee
,\cdot \right) $ with the operations $\vee $\ (maximum) and $\cdot $
(product) on the set $\mathbb{R}_{+}$ and is called max-product algebra.

Let $J\subset \mathbb{R}$ be bounded or unbounded intervals and 
\begin{equation*}
C_{B}^{+}\left( J\right) =\left\{ g:J\rightarrow \mathbb{R}_{+}:g\text{
continuous, bounded on }J\text{ }\right\} .
\end{equation*}%
Let $L_{m}$ be the max-product operator and we consider the general format
of $L_{m}:C_{B}^{+}\left( J\right) \rightarrow C_{B}^{+}\left( J\right) ,$
as follows 
\begin{equation*}
L_{m}\left( g\right) \left( y\right) =\bigvee\limits_{j=0}^{m}K_{m}\left(
y,y_{j}\right) g(y_{j})
\end{equation*}%
where $m\in 
%TCIMACRO{\U{2115} }%
%BeginExpansion
\mathbb{N}
%EndExpansion
$, $g\in CB_{+}(J)$, $K_{m}\left( .,y_{j}\right) \in CB_{+}\left( J\right) $
and $y_{j}\in J$, for all $j.$

These operators are positive, and fulfil the property of pseudo-linearity,%
\begin{equation*}
L_{m}\left( \alpha g\vee \beta h\right) \left( y\right) =\alpha
\,L_{m}\left( g\right) \left( y\right) \vee \beta \,L_{m}\left( h\right)
\left( y\right) ,\forall \alpha ,\beta \in 
%TCIMACRO{\U{211d} }%
%BeginExpansion
\mathbb{R}
%EndExpansion
_{+},g,h\in C_{B}^{+}\left( J\right) .
\end{equation*}%
which is weaker than linearity. Therefore they are not linear.

The modulus of continuity is often used in classical approximation theory to
evaluate error estimates. In \cite{approx by pseudo}, the definition of the
modulus of continuity adapted for nonlinear operators is as follows (also
its main properties are given in \cite{approx by pseudo}).

\begin{definition}
\cite{approx by pseudo} Let's consider the metric space $\left( Y,d\right) ,$
which is compact, and the metric space $\left( \left[ 0,\infty \right)
,\left\vert .\right\vert \right) $, which is the usual metric of positive
real numbers. $g:Y\rightarrow \left[ 0,\infty \right) $ being a bounded
function, the modulus of continuity is present following 
\begin{equation*}
\omega \left( g,\delta \right) =\vee \left\{ \left\vert g\left( y\right)
-g\left( q\right) \right\vert ,\text{ }y,q\in y,\text{ }d\left( y,q\right)
\leq \delta \right\} .
\end{equation*}
\end{definition}

Also, let us remember the recognized definition of the classical modulus of
continuity $g\in C_{B}^{+}\left( J\right) $ and $\delta >0,$%
\begin{equation}
\omega \left( g,\delta \right) =\max \left\{ \left\vert g\left( y\right)
-g\left( q\right) \right\vert ;\text{ }y,q\in J,\text{ }\left\vert
y-q\right\vert \leq \delta \right\} .  \label{1.1}
\end{equation}

\begin{lemma}
\label{Lemma 2.1} \cite{max-prod-berns-szazs} Let's consider that the
interval $J\subset \mathbb{R}$ is bounded (unbounded) and $%
L_{m}:C_{B}^{+}\left( J\right) \rightarrow C_{B}^{+}\left( J\right) ,$ $m\in 
%TCIMACRO{\U{2115} }%
%BeginExpansion
\mathbb{N}
%EndExpansion
$ is a sequence of operators fulfilling the following features for $g,h\in
C_{B}^{+}\left( J\right) $ and $m\in \mathbb{%
%TCIMACRO{\U{2115} }%
%BeginExpansion
\mathbb{N}
%EndExpansion
}$, which are monotonicity and sublinearity, respectively:

$(i)$ If $g\leq h$ then $L_{m}\left( g\right) \leq L_{m}\left( h\right) $.

$(ii)$ $L_{m}\left( g+h\right) \leq L_{m}\left( g\right) +L_{m}\left(
h\right) .$

Then for every $y\in J$ we have%
\begin{equation*}
\left\vert L_{m}\left( g\right) \left( y\right) -L_{m}\left( h\right) \left(
y\right) \right\vert \leq L_{m}\left( \left\vert g-h\right\vert \right)
\left( y\right) .
\end{equation*}
\end{lemma}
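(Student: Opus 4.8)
The plan is to prove the operator-comparison inequality stated in Lemma~\ref{Lemma 2.1}, namely that monotonicity and sublinearity together force
\begin{equation*}
\left\vert L_{m}\left( g\right) \left( y\right) -L_{m}\left( h\right) \left( y\right) \right\vert \leq L_{m}\left( \left\vert g-h\right\vert \right) \left( y\right) .
\end{equation*}
The standard route is to exploit the elementary pointwise bounds $g\leq h+\left\vert g-h\right\vert$ and $h\leq g+\left\vert g-h\right\vert$, which hold everywhere on $J$ because $\left\vert g-h\right\vert$ is the absolute value of the difference. The key subtlety is that $\left\vert g-h\right\vert$ must itself lie in $C_{B}^{+}\left( J\right)$ so that $L_{m}\left( \left\vert g-h\right\vert \right)$ is defined; this is immediate since $g,h$ are continuous, bounded, and nonnegative, so their difference is continuous and bounded, and its modulus is continuous, bounded, and nonnegative.

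First I would apply monotonicity (property $(i)$) to the inequality $g\leq h+\left\vert g-h\right\vert$ to obtain $L_{m}\left( g\right) \leq L_{m}\left( h+\left\vert g-h\right\vert \right)$, and then invoke sublinearity (property $(ii)$) to bound the right-hand side by $L_{m}\left( h\right) + L_{m}\left( \left\vert g-h\right\vert \right)$. Rearranging gives the one-sided estimate
\begin{equation*}
L_{m}\left( g\right) \left( y\right) - L_{m}\left( h\right) \left( y\right) \leq L_{m}\left( \left\vert g-h\right\vert \right) \left( y\right) .
\end{equation*}
By the symmetry of the hypotheses in $g$ and $h$, the same argument applied to $h\leq g+\left\vert g-h\right\vert$ yields $L_{m}\left( h\right) \left( y\right) - L_{m}\left( g\right) \left( y\right) \leq L_{m}\left( \left\vert g-h\right\vert \right) \left( y\right)$.

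Combining the two one-sided bounds gives control on both $L_{m}\left( g\right) -L_{m}\left( h\right)$ and its negative by the same quantity, which is exactly the assertion that the absolute value $\left\vert L_{m}\left( g\right) \left( y\right) -L_{m}\left( h\right) \left( y\right) \right\vert$ is at most $L_{m}\left( \left\vert g-h\right\vert \right) \left( y\right)$. The proof is essentially a direct two-line chain of inequalities, so there is no genuine technical obstacle; the only point requiring a moment of care is the bookkeeping to ensure that sublinearity is applied to the correct nonnegative summand and that the comparison function stays inside the admissible class $C_{B}^{+}\left( J\right)$. I expect the entire argument to go through without any appeal to the particular form of the kernel $K_{m}$, relying solely on the two abstract structural properties.
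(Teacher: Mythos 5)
Your proof is correct and is precisely the standard argument for this lemma: the paper itself states Lemma~\ref{Lemma 2.1} without proof, citing \cite{max-prod-berns-szazs}, and the proof given there proceeds exactly as you do, applying monotonicity to $g\leq h+\left\vert g-h\right\vert$, then sublinearity, and concluding by symmetry in $g$ and $h$. No gaps; your remark that $\left\vert g-h\right\vert$ stays in $C_{B}^{+}\left( J\right)$ is the only hypothesis-checking needed, and you handled it.
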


\begin{corollary}
\label{Corollary2.2}\ \cite{max-prod-berns-szazs} Let's consider $%
L_{m}:C_{B}^{+}\left( J\right) \rightarrow C_{B}^{+}\left( J\right) ,$ $m\in 
\mathbb{%
%TCIMACRO{\U{2115} }%
%BeginExpansion
\mathbb{N}
%EndExpansion
}$ be positive homogeneous in addition to conditions $(i),(ii)$ in Lemma \ref%
{Lemma 2.1} above. Also let $\left( L_{m}\right) _{m}$ fulfil the condition $%
L_{m}\left( e_{0}\right) =e_{0},$ for each $m\in \mathbb{%
%TCIMACRO{\U{2115} }%
%BeginExpansion
\mathbb{N}
%EndExpansion
}$. Then for each $g\in C_{B}^{+}\left( J\right) $, $m\in \mathbb{%
%TCIMACRO{\U{2115} }%
%BeginExpansion
\mathbb{N}
%EndExpansion
}$, $y\in J$ we get 
\begin{equation*}
\left\vert g\left( y\right) -L_{m}\left( g\right) \left( y\right)
\right\vert \leq \left[ 1+\frac{1}{\delta }L_{m}\left( \varphi _{y}\right)
\left( y\right) \right] \,\omega \left( g,\delta \right) .
\end{equation*}%
where $\delta >0$, $e_{0}\left( t\right) =1$ for all $t\in J,$ $\varphi
_{y}\left( t\right) =\left\vert t-y\right\vert $, for all $y\in J.$
\end{corollary}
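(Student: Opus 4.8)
The plan is to mimic the classical Shisha--Mond argument, adapted to the sublinear (pseudo-linear) setting provided by Lemma \ref{Lemma 2.1}. The decisive ingredient is the elementary property of the classical modulus of continuity (\ref{1.1}): for every $t,y\in J$ and every $\delta >0$,
\begin{equation*}
\left\vert g\left( t\right) -g\left( y\right) \right\vert \leq \left( 1+\frac{\left\vert t-y\right\vert }{\delta }\right) \omega \left( g,\delta \right) .
\end{equation*}
Reading this as an inequality between functions of $t$ (with $y$ and $\delta $ held fixed), it says that $t\mapsto \left\vert g\left( t\right) -g\left( y\right) \right\vert $ is dominated pointwise by $\omega \left( g,\delta \right) e_{0}+\delta ^{-1}\omega \left( g,\delta \right) \varphi _{y}$, a nonnegative combination of $e_{0}$ and $\varphi _{y}$. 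This is the only place the modulus of continuity enters, and, importantly for the paper's stated motivation, it uses no Cauchy--Schwarz inequality.

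First I would rewrite $g\left( y\right) $ in operator form. Since $g\in C_{B}^{+}\left( J\right) $, the value $g\left( y\right) $ is a nonnegative constant, so positive homogeneity together with $L_{m}\left( e_{0}\right) =e_{0}$ gives $L_{m}\left( g\left( y\right) e_{0}\right) \left( y\right) =g\left( y\right) L_{m}\left( e_{0}\right) \left( y\right) =g\left( y\right) $. Hence
\begin{equation*}
\left\vert g\left( y\right) -L_{m}\left( g\right) \left( y\right) \right\vert =\left\vert L_{m}\left( g\left( y\right) e_{0}\right) \left( y\right) -L_{m}\left( g\right) \left( y\right) \right\vert \leq L_{m}\left( \left\vert g-g\left( y\right) e_{0}\right\vert \right) \left( y\right) ,
\end{equation*}
where the last inequality is exactly the conclusion of Lemma \ref{Lemma 2.1} applied to the pair $g$ and $g\left( y\right) e_{0}$ (both hypotheses $(i),(ii)$ being available).

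Next I would feed the pointwise bound from the first paragraph into the monotonicity and sublinearity hypotheses. By $(i)$ the operator preserves the inequality $\left\vert g-g\left( y\right) e_{0}\right\vert \leq \omega \left( g,\delta \right) e_{0}+\delta ^{-1}\omega \left( g,\delta \right) \varphi _{y}$, and by $(ii)$ together with positive homogeneity the right-hand side splits as
\begin{equation*}
L_{m}\left( \left\vert g-g\left( y\right) e_{0}\right\vert \right) \left( y\right) \leq \omega \left( g,\delta \right) L_{m}\left( e_{0}\right) \left( y\right) +\frac{\omega \left( g,\delta \right) }{\delta }L_{m}\left( \varphi _{y}\right) \left( y\right) .
\end{equation*}
Using $L_{m}\left( e_{0}\right) \left( y\right) =1$ and chaining the two displays yields the claimed estimate. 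The argument is genuinely short; I do not expect a real obstacle, only one point needing care, namely that positive homogeneity is invoked with nonnegative scalars (the constant $g\left( y\right) $ and the coefficients $\omega \left( g,\delta \right) $ and $\delta ^{-1}\omega \left( g,\delta \right) $), which is legitimate precisely because $g$ maps into $\mathbb{R}_{+}$ and $\delta >0$.
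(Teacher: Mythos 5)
Your proof is correct, and it is essentially the standard argument behind this corollary: the paper itself states it without proof (citing the Bede--Coroianu--Gal reference), and the proof there proceeds exactly as you do --- write $g(y)=L_{m}\left( g\left( y\right) e_{0}\right) \left( y\right)$ via positive homogeneity and $L_{m}\left( e_{0}\right) =e_{0}$, apply Lemma \ref{Lemma 2.1}, then dominate $\left\vert g-g\left( y\right) e_{0}\right\vert$ pointwise by $\omega \left( g,\delta \right) e_{0}+\delta ^{-1}\omega \left( g,\delta \right) \varphi _{y}$ and use monotonicity, sublinearity and homogeneity. No gaps; your closing remark about homogeneity only being invoked with nonnegative scalars is exactly the right point of care.
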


\section{Auxiliary Results}

In this section, for the approximation theorem, we present the statements
and lemmas given in \cite{max prod MKZ}\ without proof, except for Lemma \ref%
{Lemma 4.3}. Let's note that thanks to Lemma \ref{Lemma 4.3}, which we have
revised and proved, we have been able to improve the degree of approximation
of the $Z_{m}^{(M)}(g)(y)$ operator.

\begin{remark}
\cite{max prod MKZ} $1)$ In all expressions and in the approximation theorem
we will suupose that $g:\left[ 0,1\right] \rightarrow 
%TCIMACRO{\U{211d} }%
%BeginExpansion
\mathbb{R}
%EndExpansion
_{+}$ is continuous on $\left[ 0,1\right] $. Also, we will take $0<y<1.$
Because of $Z_{m}^{(M)}(g)(0)-g\left( 0\right) =Z_{m}^{(M)}(g)(1)-g\left(
1\right) =0$ for all $m.$

$2)$ The operator $Z_{m}^{(M)}(g)(y)$ provides all the conditions of Lemma %
\ref{Lemma 2.1} and Corollary \ref{Corollary2.2}.
\end{remark}

For each $r,s\in \left\{ 0,1,2,...\right\} $ and $y\in \left[ \frac{s}{m+s},%
\frac{s+1}{m+s+1}\right] ,$ let us determine the following expressions,

\begin{equation*}
m_{r,m,s}\left( y\right) :=\dfrac{z_{m,r}\left( y\right) }{z_{m,s}\left(
y\right) },\text{ }M_{r,m,s}\left( y\right) :=\dfrac{z_{m,r}\left( y\right) 
}{z_{m,s}\left( y\right) }\left\vert \frac{r}{m+r}-y\right\vert
\end{equation*}%
similar to \cite{max prod MKZ}. We obtain the following states:

when $r\geq s+1$ 
\begin{equation*}
M_{r,m,s}\left( y\right) =m_{r,m,s}\left( y\right) \left( \frac{r}{m+r}%
-y\right)
\end{equation*}%
when $r\leq s$ 
\begin{equation*}
M_{r,m,s}\left( y\right) =m_{r,m,s}\left( y\right) \left( y-\frac{r}{m+r}%
\right) ,
\end{equation*}%
where $z_{m,r}\left( y\right) =\binom{m+r}{r}y^{r}.$

\begin{lemma}
\label{Lemma 4.1} \cite{max prod MKZ} For all $r,$ $s\in \left\{
0,1,2,...\right\} $ and $y\in \left[ \frac{s}{m+s},\frac{s+1}{m+s+1}\right]
\ $we obtain%
\begin{equation*}
m_{r,m,s}(y)\leq 1.
\end{equation*}
\end{lemma}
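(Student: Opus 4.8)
The plan is to recognise that the claim $m_{r,m,s}(y)=z_{m,r}(y)/z_{m,s}(y)\le 1$ is exactly the statement that, for $y$ in the block $\left[\frac{s}{m+s},\frac{s+1}{m+s+1}\right]$, the maximum of the finite family $\{z_{m,r}(y)\}_{r\ge 0}$ is attained at the index $r=s$. So I would aim to prove that $z_{m,r}(y)\le z_{m,s}(y)$ for every $r$, after which dividing by $z_{m,s}(y)>0$ gives the result immediately. The natural device is the ratio of consecutive terms: since $z_{m,r}(y)=\binom{m+r}{r}y^{r}$ and $\binom{m+r+1}{r+1}/\binom{m+r}{r}=\frac{m+r+1}{r+1}$, I would first record
\begin{equation*}
\frac{z_{m,r+1}(y)}{z_{m,r}(y)}=\frac{m+r+1}{r+1}\,y,
\end{equation*}
which is $\ge 1$ exactly when $y\ge \frac{r+1}{m+r+1}$ and $\le 1$ exactly when $y\le \frac{r+1}{m+r+1}$.

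With this monotonicity criterion in hand, I would split into the two cases indicated in the setup, using the elementary fact that $k\mapsto \frac{k}{m+k}$ is increasing in $k$. For $r\le s-1$ I would show the sequence climbs toward $s$: the step from $r$ to $r+1$ is non-decreasing because $\frac{r+1}{m+r+1}\le \frac{s}{m+s}\le y$, so $z_{m,0}(y)\le z_{m,1}(y)\le\cdots\le z_{m,s}(y)$. For $r\ge s+1$ I would show the sequence descends away from $s$: the step from $r-1$ to $r$ satisfies $\frac{z_{m,r}(y)}{z_{m,r-1}(y)}=\frac{m+r}{r}y\le 1$ because $\frac{r}{m+r}\ge \frac{s+1}{m+s+1}\ge y$, giving $z_{m,s}(y)\ge z_{m,s+1}(y)\ge\cdots$. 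Chaining these two monotone runs yields $z_{m,r}(y)\le z_{m,s}(y)$ for all $r$, hence $m_{r,m,s}(y)\le 1$.

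The argument is essentially a unimodality (peak-location) computation, so there is no deep obstacle; the only point requiring care is the bookkeeping of which endpoint of the block $\left[\frac{s}{m+s},\frac{s+1}{m+s+1}\right]$ controls each direction. Concretely, the lower endpoint $\frac{s}{m+s}$ is what forces the ascending run for indices below $s$, while the upper endpoint $\frac{s+1}{m+s+1}$ is what forces the descending run for indices above $s$, and one must check that the comparisons $\frac{r+1}{m+r+1}\le \frac{s}{m+s}$ (for $r\le s-1$) and $\frac{r}{m+r}\ge \frac{s+1}{m+s+1}$ (for $r\ge s+1$) hold with the correct inequality direction coming from the monotonicity of $k\mapsto \frac{k}{m+k}$. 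Once those two chains of inequalities are set up correctly, the conclusion is automatic.
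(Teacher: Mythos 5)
Your proof is correct: the consecutive-ratio identity $z_{m,r+1}(y)/z_{m,r}(y)=\frac{m+r+1}{r+1}\,y$, combined with the monotonicity of $k\mapsto \frac{k}{m+k}$ and the position of $y$ in the block $\left[\frac{s}{m+s},\frac{s+1}{m+s+1}\right]$, correctly shows that $z_{m,r}(y)$ increases up to $r=s$ and decreases for $r\geq s$, so $z_{m,r}(y)\leq z_{m,s}(y)$ (with $z_{m,s}(y)>0$ on the block) and hence $m_{r,m,s}(y)\leq 1$. Note that the paper states this lemma without proof, citing \cite{max prod MKZ}; your argument is essentially the standard unimodality proof given in that reference, so there is no methodological divergence to report.
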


\begin{lemma}
\label{Lemma 4.2} \cite{max prod MKZ} We obtain 
\begin{equation*}
\bigvee\limits_{r=0}^{\infty }z_{m,r}\left( y\right) =z_{m,s}(y),\text{ for
each }y\in \left[ \frac{s}{m+s},\frac{s+1}{m+s+1}\right] ,\text{ }s=0,1,2,...
\end{equation*}%
where $z_{m,r}\left( y\right) =\binom{m+r}{r}y^{r}.$
\end{lemma}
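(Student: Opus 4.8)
The plan is to fix $y\in[0,1)$ and study the sequence $\left(z_{m,r}(y)\right)_{r=0}^{\infty}$ through the ratio of consecutive terms. First I would compute, using the elementary identity $\binom{m+r+1}{r+1}=\frac{m+r+1}{r+1}\binom{m+r}{r}$,
\begin{equation*}
\frac{z_{m,r+1}(y)}{z_{m,r}(y)}=\frac{\binom{m+r+1}{r+1}}{\binom{m+r}{r}}\,y=\frac{m+r+1}{r+1}\,y.
\end{equation*}
From this one reads off that $z_{m,r+1}(y)\geq z_{m,r}(y)$ precisely when $y\geq\frac{r+1}{m+r+1}$, and $z_{m,r+1}(y)\leq z_{m,r}(y)$ when $y\leq\frac{r+1}{m+r+1}$.

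The decisive observation is that the multiplier $\frac{m+r+1}{r+1}=1+\frac{m}{r+1}$ is strictly decreasing in $r$, so the consecutive ratio is strictly decreasing in $r$. This forces the sequence to be unimodal: it is non-decreasing up to some index and non-increasing thereafter, and hence any local maximum is automatically the global maximum. Since $y<1$ the terms eventually decay to $0$, so the supremum is genuinely attained.

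Next I would specialize to $y\in\left[\frac{s}{m+s},\frac{s+1}{m+s+1}\right]$. The lower bound $y\geq\frac{s}{m+s}$ applied with $r=s-1$ gives $z_{m,s}(y)\geq z_{m,s-1}(y)$, while the upper bound $y\leq\frac{s+1}{m+s+1}$ applied with $r=s$ gives $z_{m,s+1}(y)\leq z_{m,s}(y)$. Combined with unimodality, this shows the sequence increases for $r\leq s$ and decreases for $r\geq s$, so that $\bigvee_{r=0}^{\infty}z_{m,r}(y)=z_{m,s}(y)$. The boundary case $s=0$ uses only the upper-bound inequality, since there is no term with index $-1$.

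The main obstacle, and really the only nonroutine point, is upgrading the two one-step inequalities at $r=s-1$ and $r=s$ into a statement about the global maximum over all $r\geq 0$. This is exactly what the monotonicity of the ratio $\frac{m+r+1}{r+1}\,y$ in $r$ delivers, so verifying that monotonicity is the crux of the argument; the remaining computations are routine.
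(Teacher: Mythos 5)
Your proof is correct: the ratio computation $\frac{z_{m,r+1}(y)}{z_{m,r}(y)}=\frac{m+r+1}{r+1}\,y$, the strict monotonicity of this ratio in $r$, the resulting unimodality, and the two endpoint checks at $r=s-1$ and $r=s$ together do pin the global maximum at $r=s$. Note, however, that the paper itself gives no proof of this lemma (it is quoted from \cite{max prod MKZ} without proof), and within the paper's own structure the statement is an immediate one-line consequence of Lemma \ref{Lemma 4.1}: the bound $m_{r,m,s}(y)=z_{m,r}(y)/z_{m,s}(y)\leq 1$ for all $r$ and all $y\in\left[\frac{s}{m+s},\frac{s+1}{m+s+1}\right]$ says precisely that $z_{m,s}(y)$ dominates every term, and since $r=s$ itself occurs in the supremum, $\bigvee_{r=0}^{\infty}z_{m,r}(y)=z_{m,s}(y)$. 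So your argument is a self-contained re-derivation that in effect proves Lemma \ref{Lemma 4.1} along the way; this is essentially the technique used in the cited source, and it buys independence from that reference at the cost of redoing work the paper simply imports. One small point to tighten: when $s=0$ the interval contains $y=0$, where your ratios are undefined because $z_{m,r}(0)=0$ for $r\geq 1$; the claim there is trivial since $z_{m,0}(0)=1$ while all other terms vanish, but you should dispose of that case separately rather than divide by zero.
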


\begin{lemma}
\label{Lemma 4.3} Let $y\in \left[ \frac{s}{m+s},\frac{s+1}{m+s+1}\right] $
and $\alpha \in \left\{ 2,3,...\right\} .$

$(i)$ If $\ r\in \left\{ s+1,s+2,...\right\} $ is such that 
\begin{equation*}
s\leq r-\left[ r+1+\frac{\left( s+1\right) ^{2}}{m}\right] ^{\frac{1}{\alpha 
}},
\end{equation*}%
then we have%
\begin{equation*}
M_{r,m,s}(y)\geq M_{r+1,m,s}(y).
\end{equation*}%
$(ii)$ If $r\in \left\{ 0,1,...,s\right\} $ is such that 
\begin{equation*}
s\geq r+\left[ r+\frac{s^{2}}{m}\right] ^{\frac{1}{\alpha }},
\end{equation*}%
then we have 
\begin{equation*}
M_{r,m,s}(y)\geq \text{$M$}_{r-1,m,s}(y).
\end{equation*}
\end{lemma}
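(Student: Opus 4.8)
The plan is to prove both monotonicity claims by forming the ratio of consecutive $M$-values and showing it is at least $1$ under the stated hypotheses. I will treat part $(i)$ in detail; part $(ii)$ is entirely symmetric.

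\medskip

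\noindent\textbf{Setting up the ratio.}
First I would use the explicit formula for the basis functions. Since
$z_{m,r}(y)=\binom{m+r}{r}y^{r}$, a direct computation gives
$\dfrac{z_{m,r+1}(y)}{z_{m,r}(y)}=\dfrac{m+r+1}{r+1}\,y$, and hence
$\dfrac{m_{r+1,m,s}(y)}{m_{r,m,s}(y)}=\dfrac{m+r+1}{r+1}\,y$.
Because we are in the range $r\geq s+1$, both $M_{r,m,s}$ and $M_{r+1,m,s}$ carry the factor $\left(\frac{r}{m+r}-y\right)$ and $\left(\frac{r+1}{m+r+1}-y\right)$ respectively (each nonnegative on the interval under consideration), so I can write
\begin{equation*}
\frac{M_{r,m,s}(y)}{M_{r+1,m,s}(y)}
=\frac{m_{r,m,s}(y)}{m_{r+1,m,s}(y)}\cdot
\frac{\frac{r}{m+r}-y}{\frac{r+1}{m+r+1}-y}
=\frac{r+1}{(m+r+1)\,y}\cdot
\frac{\frac{r}{m+r}-y}{\frac{r+1}{m+r+1}-y}.
\end{equation*}
The goal is to show this ratio is $\geq 1$, i.e. that $M_{r,m,s}(y)\geq M_{r+1,m,s}(y)$.

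\medskip

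\noindent\textbf{Reducing to a polynomial inequality in $y$.}
Clearing the positive denominators, the inequality $M_{r,m,s}(y)\geq M_{r+1,m,s}(y)$ becomes a quadratic inequality in $y$ of the form $A y^{2}-B y+C\geq 0$ (after collecting terms), whose coefficients are explicit polynomials in $m,r$. Rather than analyzing the quadratic in full generality, I would exploit that $y$ ranges only over $\left[\frac{s}{m+s},\frac{s+1}{m+s+1}\right]$, so it suffices to verify the inequality at the worst endpoint of this subinterval. Since increasing $y$ shrinks both bracketed differences, I expect the binding constraint to come from the upper endpoint $y=\frac{s+1}{m+s+1}$; substituting this value turns the problem into a purely arithmetic inequality relating $r,s,m$. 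This is exactly the point where the hypothesis $s\leq r-\left[r+1+\frac{(s+1)^{2}}{m}\right]^{1/\alpha}$ must enter.

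\medskip

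\noindent\textbf{Where the hypothesis bites.}
The main obstacle — and the place where the improvement over \cite{max prod MKZ} originates — is extracting the exponent $1/\alpha$. After substituting the endpoint and simplifying, the inequality should reduce to a comparison of the form $(r-s)\bigl[\text{something}\bigr]\geq r+1+\frac{(s+1)^{2}}{m}$, i.e. a lower bound on the gap $r-s$. The hypothesis is phrased as $r-s\geq\left[r+1+\frac{(s+1)^{2}}{m}\right]^{1/\alpha}$, so I would raise both sides to the $\alpha$-th power and show that $(r-s)^{\alpha}\geq r+1+\frac{(s+1)^{2}}{m}$ suffices to close the quadratic inequality; here the fact that $\alpha\geq 2$ (so $(r-s)^{\alpha}\geq(r-s)^{2}$ whenever $r-s\geq 1$) gives the needed slack. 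For part $(ii)$ I would repeat the computation with the roles reversed: the relevant difference is $y-\frac{r}{m+r}$, the binding endpoint is the lower one $y=\frac{s}{m+s}$, and the hypothesis $s\geq r+\left[r+\frac{s^{2}}{m}\right]^{1/\alpha}$ plays the analogous role. The trickiest bookkeeping will be verifying that the chosen endpoint really is the extremal one for the quadratic over the whole subinterval, which I would confirm by checking the sign of the derivative (equivalently, the location of the vertex) of $Ay^{2}-By+C$ on $\left[\frac{s}{m+s},\frac{s+1}{m+s+1}\right]$.
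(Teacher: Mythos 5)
Your setup is correct, and it in fact reconstructs the paper's starting point: the ratio identity
\begin{equation*}
\frac{M_{r,m,s}(y)}{M_{r+1,m,s}(y)}=\frac{r+1}{\left( m+r+1\right) y}\cdot
\frac{\frac{r}{m+r}-y}{\frac{r+1}{m+r+1}-y}
\end{equation*}
is decreasing in $y$, so its minimum over $\left[ \frac{s}{m+s},\frac{s+1}{m+s+1}\right] $ is attained at $y=\frac{s+1}{m+s+1}$, where it equals $\frac{\left( r+1\right) \left( r-s-1\right) \left( m+s+1\right) }{\left( s+1\right) \left( r-s\right) \left( m+r\right) }$; this is exactly the paper's inequality (\ref{3.1}), which it imports from Lemma 3.2 of \cite{max prod MKZ}. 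Writing $d=r-s$, the endpoint inequality you must close is equivalent to
\begin{equation*}
md^{2}+d\left( s+1\right) \geq \left( m+s+1\right) \left( r+1\right) ,
\end{equation*}
and this does follow from the $\alpha =2$ hypothesis $d^{2}\geq r+1+\frac{\left( s+1\right) ^{2}}{m}$ (multiply by $m$ and use $d+s+1=r+1$). The fatal step is your last one: you propose to deduce it from the weaker $\alpha $-hypothesis $d^{\alpha }\geq r+1+\frac{\left( s+1\right) ^{2}}{m}$ on the grounds that $d^{\alpha }\geq d^{2}$. That inequality points the wrong way. Precisely because $d^{\alpha }\geq d^{2}$, the $\alpha $-hypothesis admits strictly more pairs $\left( r,s\right) $ than the $2$-hypothesis, and for those extra pairs nothing has been proved; to transfer the conclusion you would need $d^{2}\geq d^{\alpha }$, which fails whenever $d\geq 2$ and $\alpha \geq 3$.

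Moreover, the gap cannot be repaired, because the statement itself is false for $\alpha \geq 3$. Take $\alpha =3$, $s=1$, $r=3$ and any $m\geq 4$: the hypothesis of part $(i)$ reads $1\leq 3-\left( 4+\frac{4}{m}\right) ^{1/3}$, which holds since $4+\frac{4}{m}\leq 8$; yet at the admissible point $y=\frac{s+1}{m+s+1}=\frac{2}{m+2}$ the ratio above equals
\begin{equation*}
\frac{M_{3,m,1}(y)}{M_{4,m,1}(y)}=\frac{4\cdot 1\cdot \left( m+2\right) }{2\cdot 2\cdot \left( m+3\right) }=\frac{m+2}{m+3}<1,
\end{equation*}
so $M_{3,m,1}(y)<M_{4,m,1}(y)$, contradicting the conclusion (concretely, for $m=4$, $y=\frac{1}{3}$ one gets $M_{3,4,1}=\frac{2}{27}<\frac{7}{81}=M_{4,4,1}$). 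Note that $\alpha =2$ correctly excludes this pair, since $3-\sqrt{4+\frac{4}{m}}<1$. You should also be aware that the paper's own proof commits the same directional error: its ``induction on $\alpha $'' assumes the claim under the stronger $\left( \alpha -1\right) $-hypothesis and invokes $\left( r-s\right) ^{\alpha -1}\leq \left( r-s\right) ^{\alpha }$ to conclude it under the weaker $\alpha $-hypothesis, which is the converse of what is required. So your proposal mirrors the paper's argument, flaw included; neither can be completed, and part $(ii)$ fails in the same way.
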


\begin{proof}
$(i)$ Directly we can write the following inequation from $(i)$ of Lemma $%
3.2 $ in \cite{max prod MKZ}, 
\begin{equation*}
\dfrac{M_{r,m,s}(y)}{M_{r+1,m,s}(y)}\geq \frac{\left( r+1\right) \left(
r-s-1\right) \left( m+s+1\right) }{\left( s+1\right) \left( r-s\right)
\left( r+m\right) }.
\end{equation*}%
Now, let us show that the below inequality is satisfied by using a different
and more generalized proof technique than \cite{max prod MKZ}, using the
induction method, 
\begin{equation}
\frac{\left( r+1\right) \left( r-s-1\right) \left( m+s+1\right) }{\left(
s+1\right) \left( r-s\right) \left( m+r\right) }\geq 1  \label{3.1}
\end{equation}%
holds when $s\leq r-\left[ r+1+\frac{\left( s+1\right) ^{2}}{m}\right] ^{%
\frac{1}{\alpha }}$. This condition also provides: $\left( r+1\right) +\frac{%
\left( s+1\right) ^{2}}{m}\leq (r-s)^{\alpha }.$

Firstly, when $\alpha =2$, this condition is $s\leq r-\sqrt{r+1+\frac{\left(
s+1\right) ^{2}}{m}}$ and becomes the statement demonstrated in \cite{max
prod MKZ}, i.e. inequality (\ref{3.1}) is true for $\alpha =2.$

Let us suppose that inequality (\ref{3.1}) is true for $\alpha -1.$ This
means that inequality (\ref{3.1}) is satisfied when $\left( r+1\right) +%
\frac{\left( s+1\right) ^{2}}{m}\leq (r-s)^{\alpha -1}.$

The other condition of the hypothesis $(i)$; $r\geq s+1$ i.e $r-s\geq 1,$
and $\alpha =2,3,...$ we can easily see that $\left( r-s\right) ^{\alpha
-1}\leq (r-s)^{\alpha }.$ From here we obtain that inequality (\ref{3.1}) is
satisfied for any value of $\alpha =2,3,...$. We'll finally find what we
need.

$(ii)$ Let's follow similar steps to case $i)$ as we have shown above. Once
again, let us easily write the next inequality by the case $(ii)$ of Lemma $%
3.2$ in \cite{max prod MKZ}, 
\begin{equation*}
\dfrac{\text{$M$}_{r,m,s}(y)}{\text{$M$}_{r-1,m,s}(y)}\geq \frac{s\left(
s-r\right) \left( r+m-1\right) }{r\left( m+s\right) \left( s-r+1\right) }.
\end{equation*}%
Again, by use the induction method, let it be shown that the following
inequality holds for $s\geq r+\left[ r+\frac{s^{2}}{m}\right] ^{\frac{1}{%
\alpha }}$ or $\left( s-r\right) ^{\alpha }\geq r+\frac{s^{2}}{m}$ 
\begin{equation}
\frac{s\left( s-r\right) \left( r+m-1\right) }{r\left( m+s\right) \left(
s-r+1\right) }\geq 1  \label{3.2}
\end{equation}

When $\alpha =2$, this condition is $s\geq r+\sqrt{r+\frac{s^{2}}{m}}$ and
becomes the statement demonstrated in $(ii)$ of Lemma $3.2$ in \cite{max
prod MKZ}, i.e. inequality (\ref{3.2}) is true for $\alpha =2.$

Let us now suppose that inequality (\ref{3.2})holds true for $\alpha -1$.
This means that inequality (\ref{3.2}) is satisfied when $\left( s-r\right)
^{\alpha -1}\geq r+\frac{s^{2}}{m}.$

Here, we need assume that $r\leq s-1$, because if $r=s$ is, the other
condition in the hypothesis, is not provided.

Since $r\leq s-1$ and $\alpha =2,3,...$ it follows $\left( s-r\right)
^{\alpha -1}\leq \left( s-r\right) ^{\alpha }.$ is true for every $\alpha .$
Therefore, for each value $\alpha =2,3,...$ we have shown that the
inequality (\ref{3.2}) is provided when $\left( s-r\right) ^{\alpha }\geq r+%
\frac{s^{2}}{m}.$
\end{proof}

\section{Approximation Results}

The main goal is to improve the estimation of the operators $%
Z_{m}^{(M)}(g)(y)$ for the function $g$ by using the modulus of continuity.
Due to the theorem provided, it can be observed that the degree of
approximation may be enhanced when $\alpha $ reaches a sufficiently big
value. Furthermore, using $\alpha =2$ gives approximation results that are
consistent with the findings in \cite{max prod MKZ}.

\begin{theorem}
\label{Theorem5.1} Let's consider the continuous function $g:\left[ 0,1%
\right] \rightarrow 
%TCIMACRO{\U{211d} }%
%BeginExpansion
\mathbb{R}
%EndExpansion
_{+}$. For the operators in (\ref{maxproductMKZ}) we have%
\begin{equation*}
\left\vert Z_{m}^{(M)}(g)(y)-g(y)\right\vert \leq (1+8\left( 1-y\right) 
\text{ }y^{\frac{1}{\alpha }})\text{ }\omega \left( g;\frac{1}{m^{1-\frac{1}{%
\alpha }}}\right) ,\text{ }y\in \left[ 0,1\right] ,\text{ }m\geq 4,
\end{equation*}%
where $\alpha =2,3,...$ and $\omega \left( g;\delta \right) $ is the
classical modulus of continuity defined in (\ref{1.1})
\end{theorem}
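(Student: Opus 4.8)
The plan is to apply Corollary \ref{Corollary2.2} directly: since the operator $Z_{m}^{(M)}$ satisfies monotonicity, sublinearity, positive homogeneity, and $Z_{m}^{(M)}(e_{0})=e_{0}$ (by Remark, part 2), the corollary reduces everything to estimating $Z_{m}^{(M)}(\varphi_{y})(y)$, where $\varphi_{y}(t)=|t-y|$, and then choosing $\delta$ optimally. Thus the heart of the proof is to show that
\begin{equation*}
Z_{m}^{(M)}(\varphi_{y})(y)=\bigvee_{r=0}^{\infty}m_{r,m,s}(y)\left|\frac{r}{m+r}-y\right|=\bigvee_{r=0}^{\infty}M_{r,m,s}(y)\leq \frac{C\,(1-y)\,y^{1/\alpha}}{m^{1-1/\alpha}}
\end{equation*}
for $y\in\left[\frac{s}{m+s},\frac{s+1}{m+s+1}\right]$, with an explicit constant (here $8$). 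The reduction to this max comes from Lemma \ref{Lemma 4.2}, which identifies the denominator $\bigvee_r z_{m,r}(y)$ with $z_{m,s}(y)$, so that $Z_{m}^{(M)}(\varphi_{y})(y)$ is exactly the supremum of the quantities $M_{r,m,s}(y)$.

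First I would split the supremum over $r$ into the two ranges $r\geq s+1$ and $r\leq s$, and within each range use Lemma \ref{Lemma 4.3} to control monotonicity of $r\mapsto M_{r,m,s}(y)$. The key consequence of Lemma \ref{Lemma 4.3}(i) is that, for $r$ beyond the threshold $\bar{r}:=s+\left[r+1+(s+1)^2/m\right]^{1/\alpha}$, the sequence $M_{r,m,s}$ is decreasing, so the maximum over the far tail is attained near that threshold; similarly part (ii) controls the range $r\leq s$. This means the supremum is governed by those indices $r$ with $|r-s|$ of order $\left[r+s^2/m\right]^{1/\alpha}$, i.e.\ $|r-s|\lesssim (s^2/m)^{1/\alpha}$ once $s$ is large. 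For such $r$, one estimates $\left|\frac{r}{m+r}-y\right|$ directly: since $y\in\left[\frac{s}{m+s},\frac{s+1}{m+s+1}\right]$ and $\frac{r}{m+r}$ is close to $\frac{s}{m+s}$, a telescoping/mean-value bound on $t\mapsto \frac{t}{m+t}$ (whose derivative is $\frac{m}{(m+t)^2}$) gives $\left|\frac{r}{m+r}-y\right|\lesssim \frac{m\,|r-s|}{(m+s)^2}$, combined with $m_{r,m,s}(y)\leq 1$ from Lemma \ref{Lemma 4.1}.

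Assembling these pieces, the bound on $M_{r,m,s}(y)$ becomes of order $\frac{m\,|r-s|}{(m+s)^2}\lesssim \frac{m}{(m+s)^2}\left(\frac{s^2}{m}\right)^{1/\alpha}$, and I would then rewrite this in terms of $y$: using $y\approx \frac{s}{m+s}$ one has $1-y\approx \frac{m}{m+s}$ and $s\approx \frac{my}{1-y}$, which converts the factor $\frac{m\,s^{2/\alpha}}{(m+s)^2 m^{1/\alpha}}$ into the target form $(1-y)\,y^{1/\alpha}/m^{1-1/\alpha}$, at least for the bulk of the range; the boundary cases of small $s$ (and the endpoints $y=0,1$, which vanish by the Remark) must be checked separately by crude direct estimates. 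Finally, choosing $\delta=m^{-(1-1/\alpha)}$ in Corollary \ref{Corollary2.2} produces exactly the claimed inequality with the modulus argument $\frac{1}{m^{1-1/\alpha}}$ and the bracket $1+8(1-y)y^{1/\alpha}$.

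The main obstacle I expect is the careful bookkeeping of constants and the transition between the discrete index $s$ and the continuous variable $y$ across the whole interval $[0,1)$: the monotonicity thresholds from Lemma \ref{Lemma 4.3} are only \emph{sufficient} conditions, so one must argue that the finitely many indices $r$ falling inside the threshold window contribute no more than the stated order, and one must handle uniformly both the regime where $s$ is comparable to $m$ (so $y$ is bounded away from $0$) and the regime of small $s$. Getting the sharp constant $8$ and the exact power $y^{1/\alpha}$ rather than a weaker $y$-dependence is precisely where the improved exponent $1/\alpha$ (as opposed to the $1/2$ of the Cauchy--Schwarz-based argument in \cite{max prod MKZ}) enters, and this is the delicate step that justifies the paper's main claim.
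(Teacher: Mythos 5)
Your proposal follows essentially the same route as the paper's own proof: reduction via Corollary \ref{Corollary2.2} and Lemma \ref{Lemma 4.2} to bounding $\max_{r} M_{r,m,s}(y)$, the case split $r\geq s+1$ versus $r\leq s$, use of the Lemma \ref{Lemma 4.3} thresholds to dispose of the tail indices and direct estimation of the indices inside the threshold window, conversion between $s$ and $y$ via $s\leq \frac{my}{1-y}$ and $m+s\geq\frac{m+y-1}{1-y}$, and the final choice $\delta_{m}=m^{-(1-1/\alpha)}$. Your mean-value bound on $t\mapsto\frac{t}{m+t}$ is a mild stylistic variant of the paper's explicit fraction manipulations, and the delicate points you flag (the bound $r+1\leq 2(s+1)$ inside the window, the small-$s$ and endpoint cases, the constant bookkeeping) are exactly the ones the paper settles, so the plan is sound.
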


\begin{proof}
For every $y\in \left[ \frac{s}{m+s},\frac{s+1}{m+s+1}\right] $ the operator 
$Z_{m}^{(M)}\left( g\right) \left( y\right) $ fulfils the situations in
Corollary \ref{Corollary2.2}. Thus, we get the next inequality with $\varphi
_{y}\left( t\right) =\left\vert t-y\right\vert $ 
\begin{equation}
\left\vert Z_{m}^{(M)}\left( g\right) \left( y\right) -g\left( y\right)
\right\vert \leq \left[ 1+\frac{1}{\delta _{m}}Z_{m}^{(M)}\left( \varphi
_{y}\right) \left( y\right) \right] \,\omega \left( g,\delta \right) ,
\label{4.1}
\end{equation}%
Let's define the expression%
\begin{eqnarray*}
E_{m}\left( y\right) &:&=Z_{m}^{(M)}\left( \varphi _{y}\right) \left(
y\right) \\
&=&\frac{\bigvee\limits_{r=0}^{\infty }\binom{m+r}{r}y^{r}\left\vert \frac{r%
}{m+r}-y\right\vert }{\bigvee\limits_{r=0}^{\infty }\binom{m+r}{r}y^{r}},%
\text{ }y\in \left[ 0,1\right] .
\end{eqnarray*}%
Let $y\in \left[ \frac{s}{m+s},\frac{s+1}{m+s+1}\right] ,$ where $s\in
\left\{ 0,1,...\right\} $ is constant and arbitary. Using Lemma \ref{Lemma
4.2} we may write that 
\begin{equation*}
E_{m}\left( y\right) =\max\limits_{r=0,1,...}\left\{ M_{r,m,s}\left(
y\right) \right\} .
\end{equation*}%
Initially, let we examine the $s=0$ case, $M_{r,m,0}\left( y\right) =\binom{%
m+r}{r}y^{r}\left\vert \frac{r}{m+r}-y\right\vert $ where $r=0,1,2,...$ and $%
y\in \left[ 0,\frac{1}{m+1}\right] $ and $\alpha =2,3,...$.

When $r=0$, we get%
\begin{eqnarray*}
M_{0,m,0}\left( y\right) &=&y=y^{\frac{1}{\alpha }}y^{1-\frac{1}{\alpha }%
}\leq \frac{y^{\frac{1}{\alpha }}}{\left( m+1\right) ^{1-\frac{1}{\alpha }}}
\\
&\leq &\frac{y^{\frac{1}{\alpha }}}{m^{1-\frac{1}{\alpha }}}=\frac{\left(
1-y\right) \text{ }y^{\frac{1}{\alpha }}}{m^{1-\frac{1}{\alpha }}}\frac{1}{%
\left( 1-y\right) } \\
&\leq &\frac{\left( 1-y\right) \text{ }y^{\frac{1}{\alpha }}}{m^{1-\frac{1}{%
\alpha }}}\frac{m+1}{m}\leq 2\frac{\left( 1-y\right) \text{ }y^{\frac{1}{%
\alpha }}}{m^{1-\frac{1}{\alpha }}}.
\end{eqnarray*}%
In the above inequality we used the following information. $0\leq y\leq 
\frac{1}{m+1}$ or $1-\frac{1}{m+1}\leq 1-y\leq 1,$ then we have $\frac{1}{1-y%
}\leq \frac{m+1}{m}\leq 2.$

And when $r=1,$ we get 
\begin{equation*}
M_{1,m,0}\left( y\right) =\binom{m+1}{1}y\left\vert \frac{1}{m+1}%
-y\right\vert \leq y\leq 2\frac{\left( 1-y\right) \text{ }y^{\frac{1}{\alpha 
}}}{m^{1-\frac{1}{\alpha }}},
\end{equation*}%
when $r=2$, we obtain%
\begin{eqnarray*}
M_{2,m,0}\left( y\right) &=&\binom{m+2}{2}y^{2}\left\vert \frac{2}{m+2}%
-y\right\vert \leq \frac{\left( m+1\right) \left( m+2\right) }{2}y^{2}\frac{2%
}{m+2} \\
&=&\left( m+1\right) \text{ }y^{2}\leq \left( m+1\right) \text{ }y\frac{1}{%
m+1}=y\leq 2\frac{\left( 1-y\right) \text{ }y^{\frac{1}{\alpha }}}{m^{1-%
\frac{1}{\alpha }}}.
\end{eqnarray*}%
For $r\geq 2,$ Lemma \ref{Lemma 4.3} $(i)$, it clearly following that we get 
$M_{r,m,0}\left( y\right) \geq M_{r+1,m,0}\left( y\right) $, which implies
that $E_{m}\left( y\right) =\max\limits_{r\in \left\{ 0,1,2\right\} }\left\{
M_{r,m,0}\left( y\right) \right\} ,$ $y\in \left[ 0,\frac{1}{m+1}\right] .$
Thus, we get~an upper estimate for $E_{m}\left( y\right) \leq 2\frac{\left(
1-y\right) \text{ }y^{\frac{1}{\alpha }}}{m^{1-\frac{1}{\alpha }}}$, for
each $y\in \left[ 0,\frac{1}{m+1}\right] $, when $s=0.$

At the moment it is time to look for an upper estimate for each $%
M_{r,m,s}\left( y\right) $ for $s=1,2,...$ , $y\in \left[ \frac{s}{m+s},%
\frac{s+1}{m+s+1}\right] $ and $r\in \left\{ 0,1,...\right\} $.

In fact, if we may show the next one inequality 
\begin{equation}
M_{r,m,s}\left( y\right) \leq 8\frac{\left( 1-y\right) \text{ }y^{\frac{1}{%
\alpha }}}{m^{1-\frac{1}{\alpha }}},  \label{4.2}
\end{equation}%
where $\alpha =2,3,...$ and for each $y\in \left[ \frac{s}{m+s},\frac{s+1}{%
m+s+1}\right] ,$ $r=0,1,2,...$ this is exactly equivalent to 
\begin{equation*}
E_{m}\left( y\right) \leq 8\frac{\left( 1-y\right) \text{ }y^{\frac{1}{%
\alpha }}}{m^{1-\frac{1}{\alpha }}},\text{ for all }y\in \left[ 0,1\right] 
\text{ and }m\geq 4.
\end{equation*}%
This allows us to complete the proof by taking $\delta _{m}=\frac{1}{m^{1-%
\frac{1}{\alpha }}}$ in (\ref{4.1}).

In order to show the inequality (\ref{4.2}), we have just written above, let
us examine that following situations:

$1)$ $r\geq s+1;$ $2)$ $r\leq s.$

Case $1).$ Subcase $a).$ Let $r\geq s+1$ and assume first that $s\geq r-%
\left[ \left( r+1\right) +\frac{\left( s+1\right) ^{2}}{m}\right] ^{\frac{1}{%
\alpha }}$ or $r\leq s+\left[ \left( r+1\right) +\frac{\left( s+1\right) ^{2}%
}{m}\right] ^{\frac{1}{\alpha }}.$ We get 
\begin{eqnarray*}
M_{r,m,s}\left( y\right) &=&m_{r,m,s}\left( y\right) \left( \frac{r}{m+r}%
-y\right) \leq \frac{r}{m+r}-y\leq \frac{r}{m+r}-\frac{s}{m+s} \\
&\leq &\frac{s+\left[ \left( r+1\right) +\frac{\left( s+1\right) ^{2}}{m}%
\right] ^{\frac{1}{\alpha }}}{m+s+\left[ \left( r+1\right) +\frac{\left(
s+1\right) ^{2}}{m}\right] ^{\frac{1}{\alpha }}}-\frac{s}{m+s} \\
&\leq &\frac{%
\begin{array}{c}
s\left( m+s\right) +m\left[ \left( r+1\right) +\frac{\left( s+1\right) ^{2}}{%
m}\right] ^{\frac{1}{\alpha }}+s\left[ \left( r+1\right) +\frac{\left(
s+1\right) ^{2}}{m}\right] ^{\frac{1}{\alpha }} \\ 
-s\left( m+s\right) -s\left[ \left( r+1\right) +\frac{\left( s+1\right) ^{2}%
}{m}\right] ^{\frac{1}{\alpha }}%
\end{array}%
}{\left( m+s\right) \left[ m+s+\left[ \left( r+1\right) +\frac{\left(
s+1\right) ^{2}}{m}\right] ^{\frac{1}{\alpha }}\right] } \\
&=&\frac{m\left[ \left( r+1\right) +\frac{\left( s+1\right) ^{2}}{m}\right]
^{\frac{1}{\alpha }}}{\left( m+s\right) \left[ m+s+\left[ \left( r+1\right) +%
\frac{\left( s+1\right) ^{2}}{m}\right] ^{\frac{1}{\alpha }}\right] }.
\end{eqnarray*}%
Now we observe that $r+1\leq 2\left( s+1\right) .$

Let's define the following function $\phi $%
\begin{equation}
\phi \left( r\right) :=r-\left[ \left( r+1\right) +\frac{\left( s+1\right)
^{2}}{m}\right] ^{\frac{1}{\alpha }}.  \label{fi_fonk}
\end{equation}%
In fact, if we assume $r+1>2s+2$ then $r\geq 2s+2$. Since $\alpha =2,3,...$
then $\frac{1}{\alpha }-1<0$, and also $\left( r+1\right) +\frac{\left(
s+1\right) ^{2}}{m}>0.$ So, we get 
\begin{eqnarray*}
\phi ^{\prime }\left( r\right) &=&1-\frac{1}{\alpha }\left[ \left(
r+1\right) +\frac{\left( s+1\right) ^{2}}{m}\right] ^{\frac{1}{\alpha }-1} \\
&=&1-\frac{1}{\alpha \left[ \left( r+1\right) +\frac{\left( s+1\right) ^{2}}{%
m}\right] ^{1-\frac{1}{\alpha }}}>0.
\end{eqnarray*}%
Therefore the function $\phi $ is nondecreasing on $\left[ 0,\infty \right)
, $ it follows that $\phi \left( r\right) \geq \phi \left( 2s+2\right) .$
So, we obtain $s\geq r-\left[ \left( r+1\right) +\frac{\left( s+1\right) ^{2}%
}{m}\right] ^{\frac{1}{\alpha }}\geq 2s+2-\left[ 2s+3+\frac{\left(
s+1\right) ^{2}}{m}\right] ^{\frac{1}{\alpha }}$ i.e $\left[ 2s+3+\frac{%
\left( s+1\right) ^{2}}{m}\right] ^{\frac{1}{\alpha }}\geq s+2$ or $2s+3+%
\frac{\left( s+1\right) ^{2}}{m}\geq \left( s+2\right) ^{\alpha }.$ As a
result of this inequality, there is a contradiction for each $\alpha \geq 2$%
. Such that for $\alpha =2$, $2s+3+\frac{\left( s+1\right) ^{2}}{m}\geq
s^{2}+4s+4=2s+3+\left( s+1\right) ^{2}$ means $\frac{\left( s+1\right) ^{2}}{%
m}\geq \left( s+1\right) ^{2}.$ Also since $\alpha \geq 2,3,...$ this result
contradicts for every of $\alpha \geq 2.$

As a consequences%
\begin{equation*}
\left[ \left( r+1\right) +\frac{\left( s+1\right) ^{2}}{m}\right] ^{\frac{1}{%
\alpha }}\leq \left[ 2\left( s+1\right) +\frac{\left( s+1\right) ^{2}}{m}%
\right] ^{\frac{1}{\alpha }}
\end{equation*}%
also we get%
\begin{equation*}
M_{r,m,s}\left( y\right) \leq \frac{m\left[ \left( s+1\right) \left[ 2+\frac{%
\left( s+1\right) }{m}\right] \right] ^{\frac{1}{\alpha }}}{\left(
m+s\right) \left[ m+s+\left[ \left( s+1\right) \left[ 2+\frac{\left(
s+1\right) }{m}\right] \right] ^{\frac{1}{\alpha }}\right] }.
\end{equation*}%
Since $y\in \left[ \frac{s}{m+s},\frac{s+1}{m+s+1}\right] ,$ then $y\leq 
\frac{s+1}{m+s+1}$ after simple calculations it follows $m+y-1\leq \left(
1-y\right) \left( m+s\right) $ i.e $\frac{m+y-1}{1-y}\leq m+s.$ Also, since$%
\frac{s}{m+s}\leq y,$ then we have $s\left( 1-y\right) \leq my$ i.e $s\leq 
\frac{my}{1-y}.$ So we obtain $s+1\leq 2s\leq \frac{2my}{1-y}.$

Now, let's define the following function $\vartheta $%
\begin{equation*}
\vartheta \left( \xi ,s\right) :=\frac{m\xi }{\left( m+s\right) \left(
m+s+\xi \right) }.
\end{equation*}%
The function is increasing with respect to $\xi ,$ and decreasing with
respect to $s$, when $\xi =\left[ \left( s+1\right) \left[ 2+\frac{\left(
s+1\right) }{m}\right] \right] ^{\frac{1}{\alpha }}.$ So, we get%
\begin{eqnarray*}
M_{r,m,s}\left( y\right) &\leq &\frac{m\left[ 4\frac{my}{1-y}\left[ 1+\frac{%
my}{m\left( 1-y\right) }\right] \right] ^{\frac{1}{\alpha }}}{\left( \frac{%
m+y-1}{1-y}\right) \left[ \frac{m+y-1}{1-y}+\left[ 4\frac{my}{1-y}\left[ 1+%
\frac{my}{m\left( 1-y\right) }\right] \right] ^{\frac{1}{\alpha }}\right] }
\\
&=&\frac{m\text{ }4^{\frac{1}{\alpha }}\frac{\left( my\right) ^{\frac{1}{%
\alpha }}}{\left( 1-y\right) ^{\frac{2}{\alpha }}}}{\left( \frac{m+y-1}{1-y}%
\right) ^{2}+\left( m+y-1\right) 4^{\frac{1}{\alpha }}\frac{\left( my\right)
^{\frac{1}{\alpha }}}{\left( 1-y\right) ^{\frac{2}{\alpha }+1}}} \\
&=&\frac{m\text{ }4^{\frac{1}{\alpha }}\frac{\left( my\right) ^{\frac{1}{%
\alpha }}}{\left( 1-y\right) ^{\frac{2}{\alpha }}}}{\frac{\left(
m+y-1\right) ^{2}\left( 1-y\right) ^{\frac{2}{\alpha }-1}+\left(
m+y-1\right) 4^{\frac{1}{\alpha }}\left( my\right) ^{\frac{1}{\alpha }}}{%
\left( 1-y\right) ^{\frac{2}{\alpha }+1}}} \\
&=&\frac{4^{\frac{1}{\alpha }}m^{1+\frac{1}{\alpha }}\text{ }\left(
1-y\right) y^{\frac{1}{\alpha }}}{\left( m+y-1\right) ^{2}\left( 1-y\right)
^{\frac{2}{\alpha }-1}+\left( m+y-1\right) 4^{\frac{1}{\alpha }}\left(
my\right) ^{\frac{1}{\alpha }}}.
\end{eqnarray*}%
Here let's denote the function $\psi $ as 
\begin{equation*}
\psi \left( y\right) :=\left( m+y-1\right) \left[ \left( m+y-1\right) \left(
1-y\right) ^{\frac{2}{\alpha }-1}+4^{\frac{1}{\alpha }}\left( my\right) ^{%
\frac{1}{\alpha }}\right]
\end{equation*}%
then we have $\psi $ is nondecreasing when $0<y<1$. Really since, 
\begin{eqnarray*}
\psi ^{\prime }\left( y\right) &=&\left[ \left( m+y-1\right) \left(
1-y\right) ^{\frac{2}{\alpha }-1}+4^{\frac{1}{\alpha }}\left( my\right) ^{%
\frac{1}{\alpha }}\right] \\
&&+\left( m+y-1\right) \left[ 
\begin{array}{c}
\left( 1-y\right) ^{\frac{2}{\alpha }-1}+\left( m+y-1\right) \left( \frac{2}{%
\alpha }-1\right) \left( -1\right) \left( 1-y\right) ^{\frac{2}{\alpha }-2}
\\ 
+4^{\frac{1}{\alpha }}\frac{m}{\alpha }\left( my\right) ^{\frac{1}{\alpha }%
-1}%
\end{array}%
\right]
\end{eqnarray*}%
because of $\alpha \geq 2$ or $\frac{2}{\alpha }-1\leq 0$ then $\psi
^{\prime }\left( y\right) >0$ when $0<y<1.$ Thus, we have $\psi \left(
y\right) \geq \psi \left( 0\right) =\left( m-1\right) ^{2}.$ Because of $%
m\geq 4,$ then $\left( m-1\right) ^{2}\geq $ $\frac{m^{2}}{2},$ so $\psi
\left( y\right) \geq $ $\frac{m^{2}}{2}.$

Finally, for this case, using our last result, we obtain 
\begin{equation*}
M_{r,m,s}\left( y\right) \leq 2\frac{4^{\frac{1}{\alpha }}m^{1+\frac{1}{%
\alpha }}\text{ }\left( 1-y\right) y^{\frac{1}{\alpha }}}{m^{2}}=2\frac{4^{%
\frac{1}{\alpha }}\text{ }\left( 1-y\right) y^{\frac{1}{\alpha }}}{m^{1-%
\frac{1}{\alpha }}}\leq \frac{4\text{ }\left( 1-y\right) y^{\frac{1}{\alpha }%
}}{m^{1-\frac{1}{\alpha }}}.
\end{equation*}

Subcase $b).$ Let $r\geq s+1$ and assume now that $s<r-\left[ \left(
r+1\right) +\frac{\left( s+1\right) ^{2}}{m}\right] ^{\frac{1}{\alpha }}.$
In subcase $(a)$, the function $\phi \left( r\right) =r-\left[ \left(
r+1\right) +\frac{\left( s+1\right) ^{2}}{m}\right] ^{\frac{1}{\alpha }}$
defined by equation (\ref{fi_fonk}) is non-decreasing with respect to $r\geq
0$. Hence, it could be concluded that there is a maximum value $\bar{r}\in
\left\{ 1,2,...\right\} $, fulfilling the condition $\bar{r}-\left[ \left( 
\bar{r}+1\right) +\frac{\left( s+1\right) ^{2}}{m}\right] ^{\frac{1}{\alpha }%
}<s.$ So, for $r_{1}=\bar{r}+1$ we get $r_{1}-\left[ \left( r_{1}+1\right) +%
\frac{\left( s+1\right) ^{2}}{m}\right] ^{\frac{1}{\alpha }}\geq s.$ Also,
by the way we chose $\bar{r}$ it implies that $r_{1}\leq 2\left( s+1\right)
. $ Thus, similar to subcase $(a)$ we have 
\begin{eqnarray*}
M_{\bar{r}+1,m,s}\left( y\right) &=&m_{\bar{r}+1,m,s}\left( y\right) \left( 
\frac{\bar{r}+1}{m+\bar{r}+1}-y\right) \\
&\leq &\frac{4^{\frac{1}{\alpha }}\text{ }\left( 1-y\right) y^{\frac{1}{%
\alpha }}}{m^{1-\frac{1}{\alpha }}}.
\end{eqnarray*}%
Since $r_{1}>r_{1}-\left[ \left( r_{1}+1\right) +\frac{\left( s+1\right) ^{2}%
}{m}\right] ^{\frac{1}{\alpha }}\geq s,$ following $r_{1}\geq s+1$ and from
Lemma \ref{Lemma 4.3}$\ (i)$ as a result, $M_{\bar{r}+1,m,s}\left( y\right)
\geq M_{\bar{r}+2,m,s}\left( y\right) \geq ...$ . Hence, we have $%
M_{r,m,s}\left( y\right) <\frac{4^{\frac{1}{\alpha }}\text{ }\left(
1-y\right) y^{\frac{1}{\alpha }}}{m^{1-\frac{1}{\alpha }}}$ for each $r\in
\left\{ \bar{r}+1,\bar{r}+2,...\right\} .$

Case $2).$Subcase $a).$ Let $r\leq s$ and assume first that $s\leq r+\left[
r+\frac{s^{2}}{m}\right] ^{\frac{1}{\alpha }}\ $or $s-\left[ r+\frac{s^{2}}{m%
}\right] ^{\frac{1}{\alpha }}\leq r.$ We have 
\begin{eqnarray*}
\text{$M$}_{r,m,s}\left( y\right) &=&m_{r,m,s}\left( y\right) \left( y-\frac{%
r}{m+r}\right) \leq \frac{s+1}{m+s+1}-\frac{r}{m+r} \\
&\leq &\frac{s+1}{m+s+1}-\frac{s-\left[ r+\frac{s^{2}}{m}\right] ^{\frac{1}{%
\alpha }}}{m+s-\left[ r+\frac{s^{2}}{m}\right] ^{\frac{1}{\alpha }}} \\
&=&\frac{%
\begin{array}{c}
\left( m+s\right) s+m+s-s\left[ r+\frac{s^{2}}{m}\right] ^{\frac{1}{\alpha }%
}-\left[ r+\frac{s^{2}}{m}\right] ^{\frac{1}{\alpha }}-\left( m+s\right) s-s
\\ 
+m\left[ r+\frac{s^{2}}{m}\right] ^{\frac{1}{\alpha }}+s\left[ r+\frac{s^{2}%
}{m}\right] ^{\frac{1}{\alpha }}+\left[ r+\frac{s^{2}}{m}\right] ^{\frac{1}{%
\alpha }}%
\end{array}%
}{\left( m+s+1\right) \left[ m+s-\left[ r+\frac{s^{2}}{m}\right] ^{\frac{1}{%
\alpha }}\right] } \\
&=&\frac{m\left[ \left[ r+\frac{s^{2}}{m}\right] ^{\frac{1}{\alpha }}+1%
\right] }{\left( m+s+1\right) \left[ m+s-\left[ r+\frac{s^{2}}{m}\right] ^{%
\frac{1}{\alpha }}\right] }.
\end{eqnarray*}%
Taking into account that $r\leq s$ and $s\leq \frac{my}{1-y}$ we have%
\begin{eqnarray*}
\text{$M$}_{r,m,s}\left( y\right) &\leq &\frac{m\left[ \left[ s+\frac{s^{2}}{%
m}\right] ^{\frac{1}{\alpha }}+1\right] }{\left( m+s+1\right) \left[ m+s-%
\left[ s+\frac{s^{2}}{m}\right] ^{\frac{1}{\alpha }}\right] } \\
&\leq &\frac{m\left[ \left[ \frac{my}{1-y}+\frac{\left( \frac{my}{1-y}%
\right) ^{2}}{m}\right] ^{\frac{1}{\alpha }}+1\right] }{\left( m+s+1\right) %
\left[ m+s-\left[ \frac{my}{1-y}+\frac{\left( \frac{my}{1-y}\right) ^{2}}{m}%
\right] ^{\frac{1}{\alpha }}\right] } \\
&=&\frac{m\left[ \left[ \frac{my}{1-y}\frac{1}{1-y}\right] ^{\frac{1}{\alpha 
}}+1\right] }{\left( m+s+1\right) \left[ m+s-\left[ \frac{my}{1-y}\frac{1}{%
1-y}\right] ^{\frac{1}{\alpha }}\right] } \\
&=&\frac{m\left[ \left( my\right) ^{\frac{1}{\alpha }}+\left( 1-y\right) ^{%
\frac{2}{\alpha }}\right] }{\left( m+s+1\right) \left[ \left( m+s\right)
\left( 1-y\right) ^{\frac{2}{\alpha }}-\left( my\right) ^{\frac{1}{\alpha }}%
\right] }.
\end{eqnarray*}%
Also because of $\frac{m+y-1}{1-y}=\frac{m}{1-y}-1\leq s+m$ and we have%
\begin{eqnarray*}
\text{$M$}_{r,m,s}\left( y\right) &\leq &\frac{m\left[ \left( my\right) ^{%
\frac{1}{\alpha }}+\left( 1-y\right) ^{\frac{2}{\alpha }}\right] }{\left( 
\frac{m}{1-y}-1+1\right) \left[ \left( \frac{m}{1-y}-1\right) \left(
1-y\right) ^{\frac{2}{\alpha }}-\left( my\right) ^{\frac{1}{\alpha }}\right] 
} \\
&=&\frac{\left( 1-y\right) \left[ \left( my\right) ^{\frac{1}{\alpha }%
}+\left( 1-y\right) ^{\frac{2}{\alpha }}\right] }{m\left( 1-y\right) ^{\frac{%
2}{\alpha }-1}-\left( my\right) ^{\frac{1}{\alpha }}-\left( 1-y\right) ^{%
\frac{2}{\alpha }}} \\
&\leq &\frac{\left( 1-y\right) \left[ \left( my\right) ^{\frac{1}{\alpha }}+1%
\right] }{m\left( 1-y\right) ^{\frac{2}{\alpha }-1}-\left( my\right) ^{\frac{%
1}{\alpha }}-\left( 1-y\right) ^{\frac{2}{\alpha }}}.
\end{eqnarray*}%
In the last inequality we used $\left( 1-y\right) ^{\frac{2}{\alpha }}\leq
1. $

Since $s\geq 1$ and $m\geq 4$ also $y\in \left[ \frac{s}{m+s},\frac{s+1}{%
m+s+1}\right] ,$ then $\frac{4}{5}\leq my$ it follows $\left( \frac{4}{5}%
\right) ^{\frac{1}{\alpha }}\leq \left( my\right) ^{\frac{1}{\alpha }}$, so $%
1\leq \left( \frac{5}{4}\right) ^{\frac{1}{\alpha }}$ $\left( my\right) ^{%
\frac{1}{\alpha }}.$ We obtain $\left( my\right) ^{\frac{1}{\alpha }}+1\leq
\left( my\right) ^{\frac{1}{\alpha }}+\left( \frac{5}{4}\right) ^{\frac{1}{%
\alpha }}\left( my\right) ^{\frac{1}{\alpha }}=\left[ 1+\left( \frac{5}{4}%
\right) ^{\frac{1}{\alpha }}\right] \left( my\right) ^{\frac{1}{\alpha }%
}\leq \frac{5}{2}\left( my\right) ^{\frac{1}{\alpha }}.$

Also, let's denote the function $\lambda $ as 
\begin{equation*}
\lambda \left( y\right) :=m\left( 1-y\right) ^{\frac{2}{\alpha }-1}-\left(
my\right) ^{\frac{1}{\alpha }}-\left( 1-y\right) ^{\frac{2}{\alpha }}.
\end{equation*}%
We have 
\begin{eqnarray*}
\lambda ^{\prime }\left( y\right) &=&m\left( \frac{2}{\alpha }-1\right)
\left( -1\right) \left( 1-y\right) ^{\frac{2}{\alpha }-2}-\frac{m}{\alpha }%
\left( my\right) ^{\frac{1}{\alpha }-1}-\frac{2}{\alpha }\left( -1\right)
\left( 1-y\right) ^{\frac{2}{\alpha }-1} \\
&=&\frac{m\left( \alpha -2\right) }{\alpha \left( 1-y\right) ^{2-\frac{2}{%
\alpha }}}-\frac{m}{\alpha \left( my\right) ^{1-\frac{1}{\alpha }}}+\frac{2}{%
\alpha \left( 1-y\right) ^{1-\frac{2}{\alpha }}} \\
&=&\frac{m\left( \alpha -2\right) \left( my\right) ^{1-\frac{1}{\alpha }%
}-m\left( 1-y\right) ^{2-\frac{2}{\alpha }}+2\left( 1-y\right) }{\alpha
\left( 1-y\right) ^{2-\frac{2}{\alpha }}\left( my\right) ^{1-\frac{1}{\alpha 
}}}.
\end{eqnarray*}%
So, because of $m\geq 4,$ the function $\lambda $ is nonincreasing for $%
\alpha =2.$ And for $\alpha \geq 3$ the function $\lambda $ is nondecreasing
when $0<y<1$.

Let $\alpha =2,$ then we have $\lambda \left( y\right) \geq \lambda \left(
1\right) ,$ 
\begin{equation*}
\lim\limits_{y\rightarrow 1}\left[ \frac{m}{\left( 1-y\right) ^{1-\frac{2}{%
\alpha }}}-\left[ \left( my\right) ^{\frac{1}{\alpha }}+\left( 1-y\right) ^{%
\frac{2}{\alpha }}\right] \right] \geq \lim\limits_{y\rightarrow 1}\frac{m}{2%
}.
\end{equation*}%
Now, let $\alpha \geq 3,$ then we have $\lambda \left( y\right) \geq \lambda
\left( 0\right) =m-1$ and since $m\geq 4$, then clearly it follows$\ \lambda
\left( 0\right) \geq \frac{m}{2}.$

Therefore, in both case, we have $\lambda \left( y\right) \geq \frac{m}{2}$
or $\frac{1}{\lambda \left( y\right) }\leq \frac{2}{m}.$

Using what we found above, we have 
\begin{equation*}
\text{$M$}_{r,m,s}\left( y\right) \leq \frac{5}{2}\frac{\left( 1-y\right)
\left( my\right) ^{\frac{1}{\alpha }}}{\frac{m}{2}}=5\frac{\left( 1-y\right)
y^{\frac{1}{\alpha }}}{m^{1-\frac{1}{\alpha }}}.
\end{equation*}%
Subcase $b).$ Now let $r\leq s$ and assume that $s>r+\left[ r+\frac{s^{2}}{m}%
\right] ^{\frac{1}{\alpha }}.$ Let $\tilde{r}\in \left\{ 1,2,...,s\right\} $
be the minimum value such that $\tilde{r}+\left[ \tilde{r}+\frac{s^{2}}{m}%
\right] ^{\frac{1}{\alpha }}>s$ or $\tilde{r}>s-\left[ \tilde{r}+\frac{s^{2}%
}{m}\right] ^{\frac{1}{\alpha }}.$ Then $r_{2}=\tilde{r}-1$ provides $r_{2}+%
\left[ r_{2}+\frac{s^{2}}{m}\right] ^{\frac{1}{\alpha }}\leq s$ and similar
to subcase $a)$ we get 
\begin{eqnarray*}
M_{\tilde{r}-1,m,s}\left( y\right) &=&m_{\tilde{r}-1,m,s}\left( y\right)
\left( y-\frac{\tilde{r}-1}{m+\tilde{r}-1}\right) \\
&\leq &\frac{s+1}{m+s+1}-\frac{\tilde{r}-1}{m+\tilde{r}-1} \\
&\leq &\frac{s+1}{m+s+1}-\frac{s-\left[ \tilde{r}+\frac{s^{2}}{m}\right] ^{%
\frac{1}{\alpha }}-1}{m+s-\left[ \tilde{r}+\frac{s^{2}}{m}\right] ^{\frac{1}{%
\alpha }}-1} \\
&=&\frac{%
\begin{array}{c}
\left( m+s\right) s+m+s-s\left[ \tilde{r}+\frac{s^{2}}{m}\right] ^{\frac{1}{%
\alpha }}-\left[ \tilde{r}+\frac{s^{2}}{m}\right] ^{\frac{1}{\alpha }%
}-s-1-\left( m+s\right) s-s \\ 
+m\left[ \tilde{r}+\frac{s^{2}}{m}\right] ^{\frac{1}{\alpha }}+s\left[ 
\tilde{r}+\frac{s^{2}}{m}\right] ^{\frac{1}{\alpha }}+\left[ \tilde{r}+\frac{%
s^{2}}{m}\right] ^{\frac{1}{\alpha }}+m+s+1%
\end{array}%
}{\left( m+s+1\right) \left[ m+s-\left[ \tilde{r}+\frac{s^{2}}{m}\right] ^{%
\frac{1}{\alpha }}-1\right] } \\
&=&\frac{m\left[ \left[ \tilde{r}+\frac{s^{2}}{m}\right] ^{\frac{1}{\alpha }%
}+2\right] }{\left( m+s+1\right) \left[ m+s-\left[ \tilde{r}+\frac{s^{2}}{m}%
\right] ^{\frac{1}{\alpha }}-1\right] }.
\end{eqnarray*}%
And since $\tilde{r}\leq s$ and $s\leq \frac{my}{1-y}$ then similar to above
subcase we have 
\begin{eqnarray*}
M_{\tilde{r}-1,m,s}\left( y\right) &\leq &\frac{m\left[ \left[ s+\frac{s^{2}%
}{m}\right] ^{\frac{1}{\alpha }}+2\right] }{\left( m+s+1\right) \left[ m+s-%
\left[ s+\frac{s^{2}}{m}\right] ^{\frac{1}{\alpha }}-1\right] } \\
&\leq &\frac{m\left[ \left[ \frac{my}{1-y}\frac{1}{1-y}\right] ^{\frac{1}{%
\alpha }}+2\right] }{\left( m+s+1\right) \left[ m+s-\left[ \frac{my}{1-y}%
\frac{1}{1-y}\right] ^{\frac{1}{\alpha }}-1\right] } \\
&=&\frac{m\left[ \left( my\right) ^{\frac{1}{\alpha }}+2\left( 1-y\right) ^{%
\frac{2}{\alpha }}\right] }{\left( m+s+1\right) \left[ \left( m+s-1\right)
\left( 1-y\right) ^{\frac{2}{\alpha }}-\left( my\right) ^{\frac{1}{\alpha }}%
\right] }.
\end{eqnarray*}%
Also because of $\frac{m}{1-y}-1\leq m+s$ then we get%
\begin{eqnarray*}
M_{\tilde{r}-1,m,s}\left( y\right) &\leq &\frac{m\left[ \left( my\right) ^{%
\frac{1}{\alpha }}+2\left( 1-y\right) ^{\frac{2}{\alpha }}\right] }{\left( 
\frac{m}{1-y}-1+1\right) \left[ \left( \frac{m}{1-y}-2\right) \left(
1-y\right) ^{\frac{2}{\alpha }}-\left( my\right) ^{\frac{1}{\alpha }}\right] 
} \\
&=&\frac{\left( 1-y\right) \left[ \left( my\right) ^{\frac{1}{\alpha }%
}+2\left( 1-y\right) ^{\frac{2}{\alpha }}\right] }{m\left( 1-y\right) ^{%
\frac{2}{\alpha }-1}-\left( my\right) ^{\frac{1}{\alpha }}-2\left(
1-y\right) ^{\frac{2}{\alpha }}} \\
&\leq &\frac{\left( 1-y\right) \left[ \left( my\right) ^{\frac{1}{\alpha }}+2%
\right] }{m\left( 1-y\right) ^{\frac{2}{\alpha }-1}-\left( my\right) ^{\frac{%
1}{\alpha }}-2\left( 1-y\right) ^{\frac{2}{\alpha }}}.
\end{eqnarray*}%
Since $1\leq \left( \frac{5}{4}\right) ^{\frac{1}{\alpha }}\left( my\right)
^{\frac{1}{\alpha }}$ then we obtain $\left( my\right) ^{\frac{1}{\alpha }%
}+2\leq \left( my\right) ^{\frac{1}{\alpha }}+2\left( \frac{5}{4}\right) ^{%
\frac{1}{\alpha }}\left( my\right) ^{\frac{1}{\alpha }}=\left[ 1+2\left( 
\frac{5}{4}\right) ^{\frac{1}{\alpha }}\right] \left( my\right) ^{\frac{1}{%
\alpha }}.$ Also, let's denote the function $\mu $ as 
\begin{equation*}
\mu \left( y\right) :=m\left( 1-y\right) ^{\frac{2}{\alpha }-1}-\left(
my\right) ^{\frac{1}{\alpha }}-2\left( 1-y\right) ^{\frac{2}{\alpha }}.
\end{equation*}%
We have 
\begin{eqnarray*}
\mu ^{\prime }\left( y\right) &=&m\left( \frac{2}{\alpha }-1\right) \left(
-1\right) \left( 1-y\right) ^{\frac{2}{\alpha }-2}-\frac{m}{\alpha }\left(
my\right) ^{\frac{1}{\alpha }-1}-2\frac{2}{\alpha }\left( -1\right) \left(
1-y\right) ^{\frac{2}{\alpha }-1} \\
&=&\frac{m\left( \alpha -2\right) }{\alpha \left( 1-y\right) ^{2-\frac{2}{%
\alpha }}}-\frac{m}{\alpha \left( my\right) ^{1-\frac{1}{\alpha }}}+\frac{4}{%
\alpha \left( 1-y\right) ^{1-\frac{2}{\alpha }}} \\
&=&\frac{m\left( \alpha -2\right) \left( my\right) ^{1-\frac{1}{\alpha }%
}-m\left( 1-y\right) ^{2-\frac{2}{\alpha }}+4\left( 1-y\right) }{\alpha
\left( 1-y\right) ^{2-\frac{2}{\alpha }}\left( my\right) ^{1-\frac{1}{\alpha 
}}}.
\end{eqnarray*}%
So, because of $m\geq 4,$ the function $\mu \left( y\right) $ for $\alpha =2$%
, is nonincreasing and for $\alpha \geq 3,$ is nondecreasing when $0<y<1$.

Let $\alpha =2,$ we have $\mu \left( y\right) \geq \mu \left( 1\right) ,$ 
\begin{equation*}
\lim\limits_{y\rightarrow 1}\left[ \frac{m}{\left( 1-y\right) ^{1-\frac{2}{%
\alpha }}}-\left[ \left( my\right) ^{\frac{1}{\alpha }}+2\left( 1-y\right) ^{%
\frac{2}{\alpha }}\right] \right] \geq \lim\limits_{y\rightarrow 1}\frac{m}{2%
}.
\end{equation*}%
Now, let $\alpha \geq 3,$ we have $\mu \left( y\right) \geq \mu \left(
0\right) =m-2$ and since $m\geq 4$, then$\ \mu \left( 0\right) \geq \frac{m}{%
2}.$

In both case we have $\mu \left( y\right) \geq \frac{m}{2}$ or $\frac{1}{\mu
\left( y\right) }\leq \frac{2}{m}.$

Therefore, we get%
\begin{eqnarray*}
M_{\tilde{r}-1,m,s}\left( y\right) &\leq &\frac{\left[ 1+2\left( \frac{5}{4}%
\right) ^{\frac{1}{\alpha }}\right] \left( 1-y\right) \left( my\right) ^{%
\frac{1}{\alpha }}}{\frac{m}{2}} \\
&\leq &2\left[ 1+2\left( \frac{5}{4}\right) ^{\frac{1}{\alpha }}\right] 
\frac{\left( 1-y\right) y^{\frac{1}{\alpha }}}{m^{1-\frac{1}{\alpha }}} \\
&\leq &8\frac{\left( 1-y\right) y^{\frac{1}{\alpha }}}{m^{1-\frac{1}{\alpha }%
}}.
\end{eqnarray*}%
In the light of Lemma \ref{Lemma 4.3}, $(ii)$, following $M_{\tilde{r}%
-1,m,s}\left( y\right) \geq M_{\tilde{r}-2,m,s}\left( y\right) \geq ...\geq
M_{0,m,s}\left( y\right) .$ Thus, we obtain 
\begin{equation*}
\text{$M$}_{r,m,s}\left( y\right) \leq 8\frac{\left( 1-y\right) y^{\frac{1}{%
\alpha }}}{m^{1-\frac{1}{\alpha }}}.
\end{equation*}
\end{proof}

\begin{remark}
The study \cite{max prod MKZ} showed that the degree of approximation for $%
Z_{m}^{(M)}(g)(y)$ operators is $\frac{\left( 1-y\right) \sqrt{y}}{\sqrt{m}}$%
, using the modulus of continuity. Nevertheless, thanks to Lemma \ref{Lemma
4.3} based on Theorem \ref{Theorem5.1}, we have shown that the degree of
approximation is $\frac{\left( 1-y\right) y^{\frac{1}{\alpha }}}{m^{1-\frac{1%
}{\alpha }}}.$ For sufficiently big $\alpha $, the expression $1/m^{1-\frac{1%
}{\alpha }}$ tends to $1/m$. Consequently, this choice of \ $\alpha $
improves the order of approximation, since $1-\frac{1}{\alpha }\geq \frac{1}{%
2}$ for $\alpha =2,3,...$ .
\end{remark}


\begin{thebibliography}{99}
\bibitem{max-prod-berns} Bede, B., Coroianu, L., Gal, S. G., \textit{%
Approximation and shape preserving properties of the Bernstein operator of
max-product kind,} Intern. J. Math. and Math. Sci., (2009), Article ID
590589, 26 pages, doi:10.1155/2009/590589.

\bibitem{max-prod-Szasz} Bede, B., Coroianu, L., Gal, S. G., \textit{%
Approximation and shape preserving properties of the nonlinear Favard-Sz\'{a}%
sz-Mirakjan operator of max-product kind, }Filomat, \textbf{24(3)} (2010),
55-72.

\bibitem{max-prod Baskakov} Bede, B., Coroianu, L., Gal, S. G., \textit{%
Approximation and shape preserving properties of the nonlinear Baskakov
operator of max-product kind,} Studia Univ. Babe\c{s}-Bolyai (Cluj), Ser.
Math., \textbf{55(4)} (2010), 193-218.

\bibitem{Max-prod- Bleimann-Butz-Hahn} Bede, B., Coroianu, L., Gal, S. G., 
\textit{Approximation and shape preserving properties of the nonlinear
Bleimann-Butzer-Hahn operators of max-product kind,} Carol., Comment. Math.
Univ. Carolin., \textbf{51(3)} (2010), 397-415.

\bibitem{max prod MKZ} Bede, B., Coroianu, L., Gal, S. G., \textit{%
Approximation and shape preserving properties of the nonlinear Meyer--K\"{o}%
nig and Zeller operator of max-product kind}, Numer. Funct. Anal. Opt., 
\textbf{31(3)}, (2010), 232--253.

\bibitem{max-prod kitap} Bede, B., Coroianu, L., Gal, S. G., Approximation
by Max-Product Type Operators, Springer International Publishing,
Switzerland (2016).

\bibitem{max-prod-berns-szazs} Bede, B., Coroianu, L., Gal, S. G., \textit{%
Approximation by Nonlinear Bernstein and Favard-Sz\'{a}sz-Mirakjan operators
of max-product kind, }Journal of Concrete and Applicable Mathematics, 
\textbf{8(2)} (2010), 193-207.

\bibitem{approx by pseudo} Bede, B., Nobuhara, H., Da\v{n}kov\'{a}, M., Di
Nola, A., \textit{Approximation by pseudo-linear operators,} Fuzzy Sets and
Syst., \textbf{159} (2008) 804-820.

\bibitem{max-prod-shepard} Bede, B., Nobuhara, H., Fodor, J., Hirota, K.,, 
\textit{Max-product Shepard approximation operators,} JACIII, \textbf{10},
494-497, (2006).

\bibitem{cheneysharma} E.W. Cheney, A. Sharma, Bernstein power series,
Canad. J. Math. 16 (1964) 241--252.

\bibitem{better berns} \c{C}it, S., Do\u{g}ru, O., \textit{On better
approximation order for the nonlinear Bernstein operator of max-product kind}%
, Filomat, \textbf{38(13)}, 4767-4774, (2024). doi:
https://doi.org/10.2298/FIL2413767C

\bibitem{better BHH} \c{C}it, S., Do\u{g}ru, O., \textit{On better
approximation order for the nonlinear Bleimann-Butzer-Hahn operator of
max-product kind}, KJM., \textbf{9(2),} 225-245, (2023),
doi:10.22034/KJM.2023.357068.2641

\bibitem{better Szasz} \c{C}it, S., Do\u{g}ru, O., \textit{On better
approximation order for the nonlinear Favard-Sz\'{a}sz-Mirakjan operator of
max-product kind}, Mat. Vesn., doi: 10.57016/MV-nkMX9624

\bibitem{Gal} Gal, S. G., \textit{Shape-Preserving Approximation by Real and
Complex Polynomials, Birkh\"{a}user}, Boston-Basel-Berlin, 2008.

\bibitem{Meyer Konig Zeller} W. Meyer-K\"{o}nig, K. Zeller, \textit{%
Bernsteinsche potenzreihen}, Studia Math., \textbf{19} (1960), 89--94.
\end{thebibliography}
\end{document}